\documentclass[12pt]{article}
\usepackage{amsmath,amsfonts,amssymb,amsthm}
\usepackage{bbm,mathrsfs,bm,mathtools}
\title{Radial measures of pseudo-cones}
\author{Rolf Schneider}
\date{}
\sloppy
\jot3mm

\oddsidemargin 0.2cm
\evensidemargin 0.2cm
\topmargin 0.4cm
\headheight0cm
\headsep0cm
\textheight23.5cm
\topskip2ex
\textwidth15.5cm
\parskip1ex plus0.5ex minus0.5ex

\newcommand{\Sn}{{\mathbb S}^{n-1}}

\newcommand{\R}{{\mathbb R}}

\newcommand{\K}{{\mathcal K}}

\newcommand{\N}{{\mathbb N}}

\newcommand{\cH}{\mathcal{H}}
\newcommand{\Ha}{\mathcal{H}}

\newcommand{\D}{{\rm d}}

  \newcommand{\fed}{\,\rule{.1mm}{.20cm}\rule{.20cm}{.1mm}\,}

\newtheorem{theorem}{Theorem}
\newtheorem{lemma}{Lemma}

\begin{document}
\maketitle

\begin{abstract}
{We consider $C$-pseudo-cones, that is, closed convex sets $K\subset\R^n$ with $o\notin K\subset C$, for which $C$ is the recession cone. Here $C$ is a given closed convex cone in $\R^n$, pointed and with nonempty interior. We define a class of measures for such pseudo-cones and show how they can be interpreted as derivative measures. For a subclass of these measures, namely for dual curvature measures with negative indices, we solve a Minkowski type existence problem.}\\[2mm]
{\em Keywords:} pseudo-cone, radial measure, derivative measure, dual curvature measure, Minkowski type theorem   \\[1mm]
2020 Mathematics Subject Classification: 52A20 
\end{abstract}

\section{Introduction}\label{sec1}

The Minkowski problem for convex bodies has in recent years been the template for several generalizations and analogues. Let us first recall the classical issue. For a convex body $K$ (a compact convex set with interior points) in $\R^n$, the surface area measure $S_K$ is defined as the push-forward of the $(n-1)$-dimensional Hausdorff measure $\cH^{n-1}$, restricted to the boundary ${\rm bd}\,K$ of $K$, under the ($\Ha^{n-1}$-almost everywhere on ${\rm bd}\,K$ defined) Gauss map of $K$. This can be written as
\begin{equation}\label{1.1} 
S_K= \nu_K{_\sharp}(\cH^{n-1}\fed{{\rm bd}\,K}),
\end{equation}
where the Gauss map $\nu_K$ of $K$ is defined by letting $\nu_K(x)$ be the outer unit normal vector of $K$ at $x\in {\rm bd }\,K$ if it is uniquely defined. Thus, $S_K$ is a finite Borel measure on the unit sphere $\Sn$. Equivalently, it can be defined as a derivative measure. If $V_n$ denotes the volume in $\R^n$ and $L$ is an arbitrary convex body with support function $h_L$, then
\begin{equation}\label{1.2}
\frac{\D}{\D t} V_n(K+tL)\Big|_{t=0^+} = \int_{\Sn} h_L(u)\,S_K(\D u),
\end{equation}
where $K+tL$ denotes the Minkowski sum of $K$ and $tL$. The Minkowski problem now asks: What are the necessary and sufficient conditions on a measure $\mu$ on $\Sn$ in order that there is a convex body $K$ with $S_K=\mu$? And if it exists, in how far is it unique? In this classical case, the answers are well known (see, e.g., \cite[Chap. 8]{Sch14}).

We note that in (\ref{1.2}) the derivative is one-sided, since the definition of the Minkowski addition requires that $t\ge 0$. This can be remedied by the introduction of the Wulff shape $[h]$ of a positive function $h$ on the unit sphere and a corresponding lemma of Aleksandrov; we refer to \cite[Sect. 7.5]{Sch14}. We then have, if $o \in{\rm int}\,K$, 
\begin{equation}\label{1.2a}
\frac{\D}{\D t} V_n([h_K+th_L])\Big|_{t=0} = \int_{\Sn} h_L(u)\,S_K(\D u).
\end{equation}

Equation (\ref{1.1}) is of the type
\begin{equation}\label{1.3} 
\mu= \varphi_K{_\sharp} m_K,
\end{equation}
where $m_k$ is some standard measure, possibly with a density, defined by $K$, and $\varphi_K$ is some geometric measurable mapping, also defined by $K$, from the domain of $m_K$ to the unit sphere. The equation (\ref{1.2a}) can be written in the form
\begin{equation}\label{1.4}
\frac{\D}{\D t} \Phi(K_t)\Big|_{t=0} = \int_{\Sn} g(u)\,\mu(\D u),
\end{equation}
where $\Phi$ is a functional on convex bodies and $K_t$ is a perturbation of $K$ involving a bounded function $g$. In the case considered above, $\Phi$ is the volume, $g=h_L$, and $K_t$ is the Wulff shape of $h_K+th_L$ (for sufficiently small $|t|$). 

The literature knows several examples of measures $\mu$ defined by convex bodies that arise in the form (\ref{1.3}) and/or (\ref{1.4}). Besides the surface area measure, we mention only the cone-volume measure, the dual curvature measures, the surface area measures in the $L_p$ or more generally in the Orlicz Brunn--Minkowski theory, and the chord measures. For more information, in particular about the corresponding Minkowski problems, we refer to the recent survey article by Huang, Yang and Zhang \cite{HYZ25}.

The last decades have seen a very successful generalization of various results on convex bodies to results on convex functions or log-concave functions. For a survey, we refer to \cite[Chap. 9]{AGM21}. A function $f:\R^n\to \R$ is log-concave if $f=e^{-\varphi}$ with a convex function $\varphi:\R^n\to(-\infty,\infty]$. If $K\subset\R^n$ is a convex body and
$$ I_K^\infty(x) :=\left\{\begin{array}{ll} 0 & \mbox{if } x\in K,\\ \infty & \mbox{if }x\in\R^n\setminus K, \end{array}\right.\qquad 
{\mathbbm 1}_K(x):=\left\{\begin{array}{ll} 1 & \mbox{if } x\in K,\\ 0 & \mbox{if }x\in\R^n\setminus K \end{array}\right.$$
defines its indicator function $I_K^\infty$ and characteristic function ${\mathbbm 1}_K$, then $I_K^\infty$ is convex and ${\mathbbm 1}_K$ is log-concave. Moreover,
$I_K^\infty= h_K^*$, where $h_K$ denotes the support function of $K$ and 
$f^*$ is the Legendre transform of $f$. As 
$$(I_K^\infty)^*+(I_L^\infty)^*= h_K^{**}+h_L^{**}=h_K+h_L=h_{K+L}=h_{K+L}^{**}= (I^\infty_{K+L})^*$$
and the Minkowski sum is a central notion in the theory of convex bodies, this indicates already why the Legendre transform plays an essential role when results on convex bodies are carried over to convex functions.

For a log-concave function $f=e^{-\varphi}$, it is natural to consider the measure $\nu_f$ on $\R^n$ which has density $e^{-\varphi}$ with respect to Lebesgue measure, and to view its total measure 
$$ J(f)= \int_{\R^n} e^{-\varphi(x)}\,\D x= \int_{\R^n}f(x)\,\D x$$
as the `total mass' of $f$, assuming that it is finite. We also consider its image measure
$$ S_f:= (\nabla\varphi)_\sharp\nu_f,$$
where $\nabla \varphi$ is the (almost everywhere on ${\rm dom}\,\varphi$ defined) gradient of the convex function $\varphi$. Defining
$$ f_t:= e^{-(\varphi^*+tg)^*}$$
(so that $f_0=f$) with a function $g$, we have in analogy to results on convex bodies, under suitable assumptions, that
\begin{equation}\label{1.6}
\frac{\D}{\D t} J(f_t)\Big|_{t=0} =\int_{\R^n} g\,\D S_f.
\end{equation}
We refer to Proposition 5.4 in Rotem \cite{Rot22}, where the assumptions are that $0<\int f<\infty$ and $g:\R^n\to\R$ is continuous and bounded. There are several variants with different assumptions; see Colesanti and Fragal\`a \cite{CF13}, Cordero--Erausquin and Klartag \cite{CK15}, Ulivelli \cite{Uli24}. In \cite[Def. 1]{CK15}, the measure $S_f$ is called a moment measure, a terminology which can only be understood from the references in \cite{CK15}. Rotem \cite{Rot22} calls this measure the surface area measure of the log-concave function $f$.

We mention that Santambrogio \cite{San16} has reproved results of \cite{CK15} by measure transport methods. For a result similar to (\ref{1.6}) we also refer to the case $q=n$ of Lemma 5.2 in \cite{HLXZ16}.

\section{An analogue for pseudo-cones}\label{sec2}

The purpose of the following is to find measures analogous to $S_f$ for pseudo-cones instead of convex functions. We need a few explanations before we can formulate our problem. We work in Euclidean space $\R^n$ ($n\ge 2$), with scalar product $\langle\cdot\,,\cdot\rangle$, origin $o$, and unit sphere $\Sn$. A  closed convex cone $C\subset\R^n$ is given, with nonempty interior and not containing a line. Its dual cone $C^\circ:= \{x\in\R^n: \langle x,y\rangle\le 0\;\forall y\in C\}$ then has the same properties. We write $\Omega_C:= \Sn\cap{\rm int}\,C$ and $\Omega_{C^\circ}:= \Sn\cap {\rm int}\, C^\circ$. We denote by ${\rm int}$ the interior and by  ${\rm cl}$ the closure, in $\Sn$ as well as in $\R^n$.

A $C$-pseudo-cone is a closed convex set $K\subset C$ with $o\notin K$ for which $C$ is the recession cone (that is, $C={\rm rec}\,K:=\{z\in \R^n: K+z \subseteq K\}$). This implies that $K+C= K$. The set of all $C$-pseudo-cones is denoted by $ps(C)$. This set can be considered, in several respects, as a counterpart to the set of convex bodies containing $o$ in the interior. For example, surface area measures and their analogues and generalizations can be defined and corresponding Minkowski type problems can be investigated, the copolarity of pseudo-cones has many properties similar to those of the polarity of convex bodies, and the Gauss image problem can be treated for pseudo-cones (see, e.g., \cite{Sch18} -- \cite{Sch25d}). The gist of the following is that $C$-pseudo-cones, or rather their radial functions, can also be considered as a counterpart to the set of convex functions with bounded domain.

We need a few more notions to make this precise. Let $K\in ps(C)$. The support function of $K$ is defined by
$$ h_K(x):= \sup\{\langle x,y\rangle: y\in K\}\quad\mbox{for }x\in C^\circ.$$
The supremum is a maximum if $u\in {\rm int}\,C^\circ$. Since $h_K\le 0$, we write $\overline h_K:=-h_K$. 

The {\em radial function} of $K$ is defined by
$$ \rho_K(v):= \min\{r>0: rv\in K\}\quad\mbox{for }v\in\Omega_C.$$
Note that the minimum exists, since $C+x\subseteq K$ for any $x\in K$ and hence $\{r>0: rv \in K\}\not=\emptyset$ for $v\in\Omega_C$, and $K$ is closed. 

Let $\eta_K$ be the set of all $v\in\Omega_C$ for which the outer unit normal vector of $K$ at the boundary point $\rho_K(v)v$ is not unique. Then, as is well known, $\eta_K$ has spherical Lebesgue measure zero. The {\em radial Gauss map} $\alpha_K: \Omega_C\setminus\eta_K\to {\rm cl}\,\Omega_{C^\circ}$ is defined by letting $\alpha_K(v)$ be the unique outer unit normal vector of $K$ at $\rho_K(v)v$, for $v\in \Omega_C\setminus\eta_K$. 

The setting described in Section \ref{sec1} is briefly the following. We choose the function $F(x)=e^{-x}$; then for each convex function $\varphi$ on $\R^n$ the total mass of $f:= F\circ\varphi$ is defined by $J(f):= \int_{\R^n} f(x)\,\D x$. Let $\nu_f$ be the measure with density $F\circ \varphi$ with respect to Lebesgue measure, let $S_f:= (\nabla\varphi)_\sharp \nu_f$ be the push-forward of $\nu_f$ under the gradient map of $\varphi$. If $f_t$ denotes a suitable perturbation of $f$, so that $f_0=f$, involving a function $g$, then, under suitable assumptions, $\frac{\D}{\D t}J(f_t)\big|_{t=0} = \int_{\R^n}g\,\D S_f$.

We want to imitate this for pseudo-cones, replacing the convex function $\varphi$ by the radial function of a pseudo-cone and the function $x\mapsto e^{-x}$ by a rather general function. For this, we take a continuously differentiable, strictly decreasing function $G:(0,\infty)\to (0,\infty)$ and define $F(x) := -xG'(x)$ for $x>0$. We 
assume that for every $a>0$, the function $F|_{[a,\infty)}$ is bounded. Typical examples are $G(x)=e^{-x}$ and $F(x)=xe^{-x}$, or $G(x)= x^q$ with $q<0$ and $F(x)=|q| x^q$. For $K\in ps(C)$, let $\nu_{F,K}$ be the measure on $\Omega_C$ with density $F\circ\rho_K$ with respect to spherical Lebesgue measure. This measure is finite, since $\rho_K$ is bounded away from zero, so that the function $F\circ\rho_K$ is bounded. Instead of the gradient map of a convex function, we now employ the radial Gauss map of $K$. Thus we denote by 
$$\mu_{F,K}:= \alpha_K{_\sharp} \nu_{F,K}$$
the finite measure on ${\rm cl}\,\Omega_{C^\circ}$ which is the push-forward of $\nu_{F,K}$ under $\alpha_K$. We call this measure the {\em $F$-radial measure} of $K$. 

The following should be observed. If $z\in C\setminus\{o\}$, then $K:= C+z\in ps(C)$. In this case, $\alpha_K$ maps almost all of $\Omega_C$ to the boundary of $\Omega_{C^\circ}$, so that $\mu_{F,K}(\Omega_{C^\circ})=0$. This shows that for radial measures of $C$-pseudo-cones, the natural domain is ${\rm cl}\,\Omega_{C^\circ}$, and not $\Omega_{C^\circ}$.

Our first aim is to show that the measure $\mu_{F,K}$ can also be interpreted as a derivative measure, in analogy to (\ref{1.4}) and (\ref{1.6}). For this, we define the functional $J_G$ by
$$ J_G(K):= \int_{\Omega_C} (G\circ \rho_K)(v)\,\D v\quad\mbox{for } K\in ps(C),$$
where $\D v$ indicates integration with respect to the spherical Lebesgue measure. By our assumptions on $G$, the functional $J_G$ is finite. 

The Wulff shape appearing below will be defined in Section \ref{sec3}.

\begin{theorem}\label{T2.1}
Let $F$ and $G$ be as defined above. Let $K\in ps(C)$. Let $g:{\rm cl}\,\Omega_{C^\circ}\to\R$ be continuous. Let $K_t$ be the Wulff shape associated with $(C, \overline h_Ke^{tg})$ for $|t|\le 1$ (say). Then
$$ \frac{\D}{\D t} J_G(K_t)\Big|_{t=0} = -\int_{{\rm cl}\,\Omega_{C^\circ}} g\,\D \mu_{F,K}.$$
\end{theorem}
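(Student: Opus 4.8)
The plan is to write $J_G(K_t)=\int_{\Omega_C} G(\rho_{K_t}(v))\,\D v$ and to differentiate under the integral sign, so that the problem splits into (i) computing the pointwise derivative $\frac{\D}{\D t}\rho_{K_t}(v)\big|_{t=0}$ for almost every $v$, and (ii) exhibiting an integrable majorant for the difference quotients. By the construction of the Wulff shape in Section \ref{sec3}, $K_t\in ps(C)$ and its radial function is given by the support--radial formula
\[
\rho_{K_t}(v)=\sup_{u\in\cl\,\Omega_{C^\circ}}\frac{\overline h_K(u)\,e^{tg(u)}}{-\langle v,u\rangle},\qquad v\in\Omega_C,
\]
where the denominator is continuous and bounded below by a positive constant on $\cl\,\Omega_{C^\circ}$, since $v\in\inn C$ gives $-\langle v,u\rangle\ge\varepsilon$ for all unit $u\in C^\circ$. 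In particular $\rho_{K_0}=\rho_K$, and evaluating the supremum crudely yields the two-sided estimate $e^{-c|t|}\rho_K(v)\le\rho_{K_t}(v)\le e^{c|t|}\rho_K(v)$ with $c:=\max_{\cl\,\Omega_{C^\circ}}|g|$. Hence $\rho_{K_t}(v)\to\rho_K(v)$ as $t\to0$, and $\rho_{K_t}(v)$ stays bounded away from $0$ uniformly in $|t|\le1$.

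For the pointwise derivative I would run an envelope (Danskin type) argument. Fix $v\in\Omega_C\setminus\eta_K$ and set $u^\ast:=\alpha_K(v)$. Since $\phi(0,u):=\overline h_K(u)/(-\langle v,u\rangle)\le\rho_K(v)$ with equality exactly when $u$ is an outer normal of $K$ at $\rho_K(v)v$ (this is merely the definition of the support function), the maximizer over the compact set $\cl\,\Omega_{C^\circ}$ is unique and equals $u^\ast$, the maximum being $\rho_K(v)$. Evaluating the $t$-supremum at $u^\ast$ gives the lower bound $\rho_{K_t}(v)\ge\rho_K(v)e^{tg(u^\ast)}$; choosing a maximizer $u_t$ of the $t$-supremum gives the upper bound $\rho_{K_t}(v)\le\rho_K(v)e^{tg(u_t)}$. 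Because $\phi(0,u_t)=\rho_{K_t}(v)e^{-tg(u_t)}\to\rho_K(v)$ and the maximizer at $t=0$ is unique, every accumulation point of $u_t$ equals $u^\ast$, so $u_t\to u^\ast$ and $g(u_t)\to g(u^\ast)$ by continuity of $g$. Dividing the two bounds by $t$ and letting $t\to0^\pm$ squeezes the difference quotient between two quantities both converging to $\rho_K(v)g(u^\ast)$, whence
\[
\frac{\D}{\D t}\rho_{K_t}(v)\Big|_{t=0}=\rho_K(v)\,g(\alpha_K(v))\qquad\text{for almost all }v\in\Omega_C.
\]

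To justify differentiation under the integral, apply the mean value theorem to write the difference quotient of $G\circ\rho_{K_t}$ as $G'(\xi_{t,v})\,(\rho_{K_t}(v)-\rho_K(v))/t$ with $\xi_{t,v}$ between $\rho_K(v)$ and $\rho_{K_t}(v)$. Using $G'(x)=-F(x)/x$, the uniform estimate $|\rho_{K_t}(v)-\rho_K(v)|/|t|\le c\,e^{c}\rho_K(v)$ for $|t|\le1$, and $\xi_{t,v}\ge e^{-c}\rho_K(v)\ge e^{-c}a_0$ with $a_0:=\inf_{\Omega_C}\rho_K>0$, the difference quotient is dominated in absolute value by $c\,e^{2c}\sup_{[e^{-c}a_0,\infty)}F$, a finite constant because $F$ is bounded on every $[a,\infty)$. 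Since $\Omega_C$ has finite spherical measure, dominated convergence gives
\[
\frac{\D}{\D t}J_G(K_t)\Big|_{t=0}=\int_{\Omega_C}G'(\rho_K(v))\,\rho_K(v)\,g(\alpha_K(v))\,\D v=-\int_{\Omega_C}F(\rho_K(v))\,g(\alpha_K(v))\,\D v,
\]
where the second equality uses $xG'(x)=-F(x)$. As $\nu_{F,K}$ has density $F\circ\rho_K$ and $\mu_{F,K}=\alpha_K{}_\sharp\nu_{F,K}$, the change-of-variables formula for push-forward measures rewrites the last integral as $\int_{\cl\,\Omega_{C^\circ}}g\,\D\mu_{F,K}$, yielding the asserted identity.

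I expect the main obstacle to be the rigorous pointwise derivative: verifying that for $v\notin\eta_K$ the supremum defining $\rho_K(v)$ is attained precisely at the unique normal $\alpha_K(v)$, and that the near-optimal directions $u_t$ converge to it. This rests on the continuity (or at least upper semicontinuity) and boundedness of $\overline h_K$ on $\cl\,\Omega_{C^\circ}$ and on the identification of the radial Gauss map with the arg-max of the support--radial formula; both are supplied by the construction in Section \ref{sec3} and the basic theory of pseudo-cones.
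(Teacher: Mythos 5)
Your proposal is correct and follows essentially the same route as the paper: the paper's Lemma \ref{L3.1} is exactly your envelope argument (the squeeze $tg(u_0)\le\log\rho_{K_t}(v)-\log\rho_K(v)\le tg(u_t)$ together with $u_t\to u_0=\alpha_K(v)$), and the rest is the same chain rule plus dominated convergence plus push-forward computation. The only differences are cosmetic: you work multiplicatively where the paper works with logarithms, you obtain $\rho_{K_t}\to\rho_K$ from the explicit two-sided bound rather than citing a convergence lemma, and you majorize the difference quotients via the mean value theorem for $G$ instead of the Lipschitz constant of $\Psi(x)=G(e^x)$.
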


We prove this in Section \ref{sec3}, after some preliminaries.

Our second aim would be to solve a Minkowski type existence problem for the measure $\mu_{F,K}$: what are necessary and sufficient conditions on a Borel measure $\mu$ on ${\rm cl}\,\Omega_{C^\circ}$ in order that there is a pseudo-cone $K\in ps(C)$ for which $\mu=\mu_{F,K}$? We give only an answer for the special functions $G(x)= x^q$, $F(x)=|q|x^q$ with $q<0$. In this case, we have
\begin{equation}\label{2.1} 
\mu_{F,K}= |q|n\widetilde C_q(K,\cdot),\qquad J_G(K)= n \widetilde V_q(K).
\end{equation}
Here $\widetilde V_q(K)$ is the $q$th dual volume of $K$ and $\widetilde C_q(K,\cdot)$ denotes the $q$th dual curvature measure of $K$ (see, e.g., \cite{LYZ24}, formula (4.9) and Def. 4.3). In this way, we give a solution to a problem left open in \cite{LYZ24}, as explained in Section \ref{sec4}.

\section{Proof of Theorem \ref{T2.1}}\label{sec3}

We begin with some preliminaries. Let $K\in ps(C)$. We have already defined its support function on $C^\circ$. Being a convex function, it is continuous on the interior of its domain and hence on ${\rm int}\,C^\circ$. We show that $h_K$ is continuous on $C^\circ$. Let $x_j\in C^\circ$ for $j\in\N_0$ and suppose that $x_j\to x_0$ as $j\to\infty$. Since $h_K$ is positively homogeneous of degree one, we can assume that $x_0\not= o$. The bounded sequence $(h_K(x_j))_{j\in\N}$ has a convergent subsequence. If there were convergent subsequences with different limits, then $K$ would have two distinct supporting hyperplanes (in the sense of \cite[p. 45]{Sch14}) with normal vector $x_0$, a contradiction. It follows that $h_K(x_j)\to h_K(x_0)$, which gives the assertion. In the following, we need only $\overline h_K$ on ${\rm cl }\,\Omega_{C^\circ}$.

Let $K\in ps(C)$ and $v\in \Omega_C$. Then $\rho_K(v)v$ is in the boundary of $K$. Let $u_*$ be an outer unit normal vector of $K$ at this point. Then
\begin{equation}\label{3.1}
\overline h_K(u_*) = |\langle \rho_K(v)v, u_*\rangle|
\end{equation}
and
\begin{equation}\label{3.2}
\overline h_K(u) \le |\langle \rho_K(v)v, u\rangle|\quad\mbox{for all }u\in{\rm cl}\,\Omega_{C^\circ}.
\end{equation}

First we modify the definition of the Wulff shape. Let $f:{\rm cl}\,\Omega_{C^\circ}\to [0,\infty)$ be a continuous function which is not identically zero. We define
$$ [f]:= C\cap\bigcap_{u\in{\rm cl}\,\Omega_{C^\circ}} \{y\in\R^n: |\langle y,u\rangle| \ge f(u)\}$$
and call this the {\em Wulff shape} associated with $(C,f)$. It is nonempty, since $f$ is bounded, and it does not contain $o$ since $f$ is positive somewhere on ${\rm cl}\,\Omega_{C^\circ}$. Being an intersection of $C$-pseudo-cones, it is clearly a $C$-pseudo-cone. It follows from the definition that
\begin{equation}\label{3.2a} 
[\overline h_K]= K\quad\mbox{for }K\in ps(C)
\end{equation}
and
\begin{equation}\label{nV2}
f(u)\le \overline h_{[f]}(u) \quad\mbox{for }u\in{\rm cl}\,\Omega_{C^\circ}.
\end{equation}

Let $v\in \Omega_C$. Then $\rho_{[f]}(v)v$ is a boundary point of $[f]$. By (\ref{3.2}) and (\ref{nV2}) we get
\begin{equation}\label{3.3}
f(u)\le |\langle \rho_{[f]}(v)v,u\rangle|\quad \mbox{for all } u\in{\rm cl}\,\Omega_{C^\circ}.
\end{equation}
Let $u_*$ be an outer unit normal vector of $[f]$ at $\rho_{[f]}(v)v$. By (\ref{3.1}) we have $\overline h_{[f]}(u_*) = |\langle \rho_{[f]}(v)v,u_*\rangle|$. By (\ref{nV2}), $\overline h_{[f]}(u_*) = f(u_*)+\varepsilon$ with $\varepsilon\ge 0$. This implies that
$$ [f]\subset \{y\in\R^n: |\langle y,u_*\rangle| \ge f(u_*)+\varepsilon\},$$
so that from $\varepsilon>0$ it would follow that $\rho_K(v)v\notin [f]$, a contradiction. From $\varepsilon =0$ we see that 
\begin{equation}\label{3.4}
f(u_*) = |\langle \rho_{[f]}(v)v,u_*\rangle|.
\end{equation}

We can write (\ref{3.3}) and (\ref{3.4}) also in the form
\begin{equation}\label{3.3a}
\rho_{[f]}(v) \ge \frac{f(u)}{|\langle v,u\rangle|} \quad \mbox{for }u\in{\rm cl}\,\Omega_{C^\circ}
\end{equation}
and
\begin{equation}\label{3.3b}
\rho_{[f]}(v) =\frac{f(u_*)}{|\langle v,u_*\rangle|}.
\end{equation}
(Note that $\langle v,u\rangle\not=0$ for $v\in\Omega_C$ and $u\in{\rm cl}\,\Omega_{C^\circ}$.) Thus, the radial function of the Wulff shape is given by
$$ \rho_{[f]}(v) = \max_{u\in{\rm cl}\,\Omega_{C^\circ}}\frac{f(u)}{|\langle v,u\rangle|}$$
for $v\in\Omega_C$, and the maximum is attained at $u_*$ if and only if $u_*$ is an outer unit normal vector of $[f]$ at $\rho_{[f]}(v)v$.

We need a different form of Lemma 9 in \cite{Sch24b}. More precisely, in \cite{Sch24b} the lemma involved a compact subset $\omega$ of $\Omega_{C^\circ}$, because it was needed in that form. Now this set is replaced by ${\rm cl}\,\Omega_{C^\circ}$ (thus, the assumption that $K\in\K(C,\omega)$ is no longer necessary). Since there are other changes, too, and parts of the proof are needed later, we give the full proof. We emphasize, however, as in \cite{Sch24b}, that the main ideas (though changed) are taken from \cite{HLYZ16}.

\begin{lemma}\label{L3.1}
Let $K\in ps(C)$. Let $g:{\rm cl}\,\Omega_{C^\circ}\to \R$ be continuous. Define the function $h_t$ (for $|t|\le 1$, say) by
$$ h_t(u) := \overline h_K(u)e^{tg(u)}\quad\mbox{for }u\in{\rm cl}\,\Omega_{C^\circ},$$
and let $[h_t]$ be the Wulff shape associated with $(C,h_t)$. Then, for $v\in\Omega_C\setminus \eta_K$,
$$ \lim_{t\to 0} \frac{\log \rho_{[h_t]}(v) - \log\rho_K(v)}{t}= g(\alpha_K(v)).$$
\end{lemma}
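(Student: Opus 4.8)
The plan is to compute the limit by exploiting the explicit formula for the radial function of a Wulff shape derived just above, namely $\rho_{[f]}(v) = \max_{u\in{\rm cl}\,\Omega_{C^\circ}} f(u)/|\langle v,u\rangle|$, applied to $f=h_t$. Taking logarithms turns the quotient into a difference, so
\[
\log\rho_{[h_t]}(v) = \max_{u\in{\rm cl}\,\Omega_{C^\circ}}\bigl[\log\overline h_K(u) + tg(u) - \log|\langle v,u\rangle|\bigr].
\]
At $t=0$ this maximum equals $\log\rho_K(v)$, attained (by \eqref{3.3b} with $f=\overline h_K$ and \eqref{3.2a}) precisely at the outer unit normals $u_*$ of $K$ at $\rho_K(v)v$; for $v\notin\eta_K$ this maximizer is the unique vector $\alpha_K(v)$. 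The strategy is thus a standard envelope/Danskin-type argument: the derivative in $t$ of a maximum of functions that are affine in $t$ equals the $t$-derivative of the selected term evaluated at the maximizer, provided the maximizer is unique and varies continuously.

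I would establish the two one-sided estimates separately. For the lower bound on the difference quotient, I fix the maximizer $u_*=\alpha_K(v)$ for the $t=0$ problem and plug it into the maximum defining $\log\rho_{[h_t]}(v)$; since the max dominates any single term, I get
\[
\log\rho_{[h_t]}(v) \ge \log\overline h_K(u_*) + tg(u_*) - \log|\langle v,u_*\rangle| = \log\rho_K(v) + tg(\alpha_K(v)),
\]
which after dividing by $t>0$ (resp.\ $t<0$) gives the correct inequality on each side. For the matching upper bound I would take, for each $t$, a maximizer $u_t$ realizing $\rho_{[h_t]}(v)$, so that $\log\rho_{[h_t]}(v) = \log\overline h_K(u_t)+tg(u_t)-\log|\langle v,u_t\rangle|$, and compare with the value at $t=0$ using that $u_t$ is \emph{suboptimal} for the $t=0$ problem: $\log\rho_K(v)\ge \log\overline h_K(u_t)-\log|\langle v,u_t\rangle|$. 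Subtracting yields $\log\rho_{[h_t]}(v)-\log\rho_K(v)\le tg(u_t)$, and dividing by $t$ reduces everything to controlling $g(u_t)$ as $t\to0$.

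The crux is therefore showing $u_t\to\alpha_K(v)=u_*$ as $t\to0$, so that $g(u_t)\to g(\alpha_K(v))$ by continuity of $g$ and both one-sided limits coincide with $g(\alpha_K(v))$. This is the step I expect to be the main obstacle, because a priori $u_t$ ranges over the non-compact set ${\rm cl}\,\Omega_{C^\circ}$ and could in principle escape toward the relative boundary. I would argue that the maximizers $u_t$ stay in a fixed compact subset of ${\rm cl}\,\Omega_{C^\circ}$ for small $|t|$: the factor $e^{tg(u)}$ is uniformly close to $1$ while $\overline h_K$ and $|\langle v,\cdot\rangle|$ are fixed continuous functions, so any $u$ far from the $t=0$ maximizer yields a value of $\overline h_K(u)e^{tg(u)}/|\langle v,u\rangle|$ strictly below $\rho_K(v)$ by a margin uniform in small $t$. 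Hence every convergent subsequence of $(u_t)$ converges to a maximizer of the $t=0$ problem, which is unique and equals $\alpha_K(v)$ since $v\notin\eta_K$; this forces $u_t\to\alpha_K(v)$. Combining the two one-sided estimates with this continuity then delivers the stated limit. The role of the hypothesis $v\in\Omega_C\setminus\eta_K$ is exactly to guarantee uniqueness of the limiting maximizer, without which the two one-sided derivatives need not agree.
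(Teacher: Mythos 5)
Your proposal is correct and follows essentially the same route as the paper: the same two one-sided estimates $tg(u_*)\le \log\rho_{[h_t]}(v)-\log\rho_K(v)\le tg(u_t)$ (phrased via the max formula for $\rho_{[f]}$ rather than via outer normals, which the paper notes are equivalent), followed by the same compactness-plus-uniqueness argument that every limit point of the maximizers $u_t$ must be the unique $t=0$ maximizer $\alpha_K(v)$. One small correction: ${\rm cl}\,\Omega_{C^\circ}$ is a closed subset of $\Sn$ and hence compact, so the "escape toward the boundary" worry you flag is not actually an obstacle (and indeed the limiting maximizer is allowed to lie on the relative boundary of $\Omega_{C^\circ}$); your uniform-margin argument is nonetheless a valid, self-contained substitute for the paper's citation of the convergence $\rho_{[h_t]}\to\rho_K$.
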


\begin{proof}
Let $v\in \Omega_C\setminus \eta_K$. For $|t|\le 1$, let $u_t$ be an outer unit normal vector of the pseudo-cone $[h_t]$ at its boundary point $\rho_{[h_t]}(v)v$, so that by (\ref{3.4}) and (\ref{3.3}) we have
\begin{equation}\label{3.5}
h_t(u_t) = |\langle \rho_{[h_t]}(v)v,u_t\rangle| \quad\mbox{and}\quad h_t(u) \le |\langle \rho_{[h_t]}(v)v,u\rangle|\quad\mbox{for all }u\in{\rm cl}\,\Omega_{C^\circ}.
\end{equation}
We prove first that
\begin{equation}\label{3.6}
\lim_{t\to 0} u_t=u_0.
\end{equation}
Since ${\rm cl}\,\Omega_{C^\circ}$ is compact, there is a sequence $(u_{t_k})_{k\in\N}$ with $t_k\to 0$ that converges to some $u'\in{\rm cl}\,\Omega_{C^\circ}$. Using the uniform convergence $h_t\to h_0=\overline h_K$ on ${\rm cl}\,\Omega_{C^\circ}$ (since $|g|$ is bounded) and the convergence $\rho_{[h_t]}\to\rho_{[h_0]}= \rho_K$ (which follows from Lemma 5 in \cite{Sch18}, with $\omega$ replaced by ${\rm cl}\,\Omega_{C^\circ}$), we see from (\ref{3.5}), which implies
$$ h_{t_k}(u_{t_k}) = |\langle \rho_{[h_{t_k}]}(v)v,u_{t_k}\rangle|\quad\mbox{for }k\in\N,$$
that
$$ \overline h_K(u') =|\langle\rho_K(v)v,u'\rangle|.$$
Since
$$ \overline h_K(u) \le |\langle \rho_K(v)v,u\rangle|\quad \mbox{for all } u\in{\rm cl}\,\Omega_{C^\circ}$$
by (\ref{3.2}), this means that $u'$ is an outer unit normal vector of $K$ at $\rho_K(v)v$. Therefore (and since $v\notin\eta_K$), $u'= u_0$. Thus, every convergent sequence $(u_{t_k})_{k\in\N}$ with $t_k\to 0$ has the limit $u_0$. This proves (\ref{3.6}).

Using (\ref{3.3b}) for $f=h_t$, $u_*=u_t$ and then (\ref{3.2}) for $u=u_t$, we get
\begin{eqnarray*}
&& \log \rho_{[h_t]}(v)-\log \rho_K(v) =\log h_t(u_t) -\log|\langle v,u_t\rangle|-\log \rho_K(v)\\
&&\le \log h_t(u_t)-\log\overline h_K(u_t)= tg(u_t).
\end{eqnarray*}
Using (\ref{3.3b}) for $f=h_t$, $t=0$ and then (\ref{3.3a}) for $f=h_t$, $u=u_0$, we get
\begin{eqnarray*}
&& \log \rho_{[h_t]}(v)-\log \rho_K(v)= \log \rho_{[h_t]}(v)-\log\overline h_K(u_0) +\log|\langle v,u_0\rangle|\\
&&\ge \log h_t(u_0) -\log \overline h_K(u_0)= tg(u_0).
\end{eqnarray*}
It follows that
\begin{equation}\label{3.7}
\Bigg|\frac{\log \rho_{[h_t]}(v)-\log \rho_K(v)}{t}-g(u_0)\Bigg|\le|g(u_t)-g(u_0)|,
\end{equation}
so that (\ref{3.6}) and the continuity of $g$ give the assertion.
\end{proof}

We can now prove Theorem \ref{T2.1}.

\begin{proof}
For a positive differentiable function $\psi$ we have $\psi'=\frac{\psi'}{\psi}\psi= (\log \psi)'\psi$. Using this, we get under the assumptions of Theorem \ref{T2.1} that, for $v\in\Omega_C\setminus \eta_K$,
\begin{eqnarray*}
\frac{\D}{\D t}(G\circ \rho_{K_t})(v)\Big|_{t=0} &=& G'(\rho_K(v))\frac{\D}{\D t} \rho_{K_t}(v)\Big|_{t=0}\\
&=& G'(\rho_K(v))\rho_K(v)\frac{\D}{\D t} \log \rho_{K_t}(v)\Big|_{t=0}\\
&=& -(F\circ \rho_K)(v)g(\alpha_K(v)),
\end{eqnarray*}
where $G'(x)x=-F(x)$ and Lemma \ref{L3.1} were applied. 

By (\ref{3.7}) and the boundedness of $g$, there are constants $\delta>0$ and $b>0$ such that
$$ |\log\rho_{K_t}(v) -\log\rho_K(v)|\le b|t|$$
for $|t|<\delta$ and almost all $v\in\Omega_C$. By the continuity of the radial function, this holds for all $v\in\Omega_C$. We can assume that $\delta$ and the constant $a>0$ are chosen so that $\rho_{K_t}\ge a$ for $|t|<\delta$. The function $\Psi$ defined by $\Psi(x)= G(e^x)$ for $x\ge \log a$ is positive and has a negative derivative, hence it has a Lipschitz constant $L$. It follows that, for $v\in\Omega_C$, 
\begin{eqnarray*}
|(G\circ \rho_{K_t})(v) - (G\circ \rho_K)(v)| &=& |\Psi(\log\rho_{K_t}(v)) - \Psi(\log\rho_K(v))|\\
&\le& L|\log\rho_{K_t}(v) -\log\rho_K(v)|\le Lb|t|.
\end{eqnarray*}
Now we can apply the dominated convergence theorem and obtain
\begin{eqnarray*}
\frac{\D}{\D t} J_G(K_t)\Big|_{t=0} &=& \int_{\Omega_C} \frac{\D}{\D t}(G\circ \rho_{[K_t]})(v)\Big|_{t=0}\,\D v= -\int_{\Omega_C} g(\alpha_K(v)) (F\circ\rho_K)(v)\,\D v\\
&=& -\int_{\Omega_C} g(\alpha_K(v))\,\nu_{F,K}(\D v) = -\int_{{\rm cl}\,\Omega_{C^\circ}} g\,\D\mu_{F,K},
\end{eqnarray*}
where the general transformation theorem for integrals (or change of variable formula) was used. This completes the proof of Theorem \ref{T2.1}.
\end{proof}

\section{The dual Minkowski problem for negative exponents}\label{sec4}

We now choose $G(x)=x^q$ and  $F(x)=|q|x^q$ with $q<0$. According to (\ref{2.1}), for $K\in ps(C)$ this gives
\begin{equation}\label{4.0} 
\frac{1}{|q|n}\mu_{F,K}(\omega)= \frac{1}{n} \int_{\alpha_K^{-1}(\omega)} \rho_K(v)^q\,\D v = \widetilde C_q(K,\omega)
\end{equation}
for Borel sets $\omega\subset {\rm cl} \,\Omega_{C^\circ}$ and
\begin{equation}\label{4.0a}
\frac{1}{n}J_G(K) = \frac{1}{n} \int_{\Omega_C} \rho_K(v)^q\,\D v=\widetilde V_q(K).
\end{equation}
The measure $\widetilde C_q(K,\cdot)$ is known as the $q$th dual curvature measure of $K$ (note that $\alpha_K^{-1}(\omega)= {\boldsymbol\alpha}_K^*(\omega)\setminus\eta_K$ and $\Ha^{n-1}(\eta_K)=0$; ${\boldsymbol\alpha}_K^*$ is defined in \cite[(4.6)]{LYZ24}). Originally, these measures were introduced for convex bodies, see \cite{HLYZ16}.  Whereas a `duality' or analogy to the classical curvature measures can only be observed for integers $q\in\{0,\dots,n\}$, the measures $\widetilde C_q(K,\cdot)$ can be defined for all $q\in\R$. For pseudo-cones, these measures were introduced in \cite{LYZ24}. 

It should be observed  that $\widetilde C_q(K,\cdot)$ for $K\in ps(C)$ is a measure on ${\rm cl}\,\Omega_{C^\circ}$ and not necessarily on $\Omega_{C^\circ}$. For example, if $K=C+z$ with $z\in{\rm int}\,C$, then $K\in ps(C)$ and $\widetilde C_q(K,\cdot)$ is concentrated on ${\rm cl}\,\Omega_{C^\circ}\setminus \Omega_{C^\circ}$. For this reason, in the formula
$$ \int_{{\rm cl}\,\Omega_{C^\circ}} g(u)\,\widetilde C_q(K,\D u) = \frac{1}{n} \int_{\Omega_C} g(\alpha_K(v))\rho_K(v)^q\,\D v$$
for bounded measurable functions $g:{\rm cl}\,\Omega_{C^\circ}\to \R$, which follows from the change of variable formula, it is essential that the left-hand integral extends over the closure of $\Omega_{C\circ}$ and not only over $\Omega_{C\circ}$ (otherwise, the left integral could be zero, whereas the right integral is not).

In Li, Ye and Zhu \cite{LYZ24}, the following is proved. If $\varphi$ is a nonzero finite Borel measure on $\Omega_{C^\circ}$ whose support is a compact subset of $\Omega_{C^\circ}$ and if $q\not= 0$, then there exists $K\in ps(C)$ with $\widetilde C_q(K,\cdot)= \varphi$ (Theorem 6.1). If $\varphi$ is a nonzero finite Borel measure on $\Omega_{C^\circ}$ and if $q>0$, then there exists $K\in ps(C)$ with $\widetilde C_q(K,\cdot)= \varphi$ (Theorem 7.1). Thus, the authors solve the Minkowski existence problem for the $q$th dual curvature measure of pseudo-cones if either $q\not=0$ and the given measure has compact support, or if $q>0$, but only for finite measures (which is a severe restriction if $q>0$). They express (on page 2006) the opinion that the solvability for $q<0$, when $\varphi$ does not concentrate on a compact subset of $\Omega_{C^\circ}$, `seems to be much more intractable'. However, we prove the following.

\begin{theorem}\label{T4.1}
Let $q<0$. If $\varphi$ is a nonzero finite Borel measure on ${\rm cl}\,\Omega_{C^\circ}$, then there exists a pseudo-cone $K\in ps(C)$ with $\widetilde C_q(K,\cdot)=\varphi$.
\end{theorem}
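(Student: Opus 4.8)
## Proof Strategy for Theorem 4.1

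The plan is to solve this Minkowski-type existence problem by a variational argument, maximizing the functional $J_G$ (equivalently, the dual volume $\widetilde V_q$) subject to a normalization constraint, and then using Theorem~\ref{T2.1} to identify the minimizer's $F$-radial measure with the prescribed measure $\varphi$. Since $G(x)=x^q$ and $F(x)=|q|x^q$ with $q<0$, recall from (\ref{4.0}) and (\ref{4.0a}) that $\mu_{F,K}=|q|n\,\widetilde C_q(K,\cdot)$ and $J_G(K)=n\widetilde V_q(K)$, so producing $K$ with $\mu_{F,K}=|q|n\varphi$ is equivalent to solving $\widetilde C_q(K,\cdot)=\varphi$.

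First I would set up the optimization over the class of Wulff shapes. For a continuous positive function $f$ on $\mathrm{cl}\,\Omega_{C^\circ}$, consider the constraint $\int_{\mathrm{cl}\,\Omega_{C^\circ}} \log f\,\D\varphi = \text{const}$, and minimize (since $q<0$, I expect one wants to minimize rather than maximize) the functional $f\mapsto J_G([f])$ over this constraint set. The logarithmic constraint is natural because the perturbation in Theorem~\ref{T2.1} is multiplicative, $\overline h_K e^{tg}$, so that $\frac{\D}{\D t}\log f_t = g$; this matches the Lagrange setup perfectly. The Lagrange/Euler equation at a minimizer $K$ then reads
$$
\frac{\D}{\D t} J_G(K_t)\Big|_{t=0} = \lambda \int_{\mathrm{cl}\,\Omega_{C^\circ}} g\,\D\varphi
$$
for all continuous $g$, and by Theorem~\ref{T2.1} the left side is $-\int g\,\D\mu_{F,K}$. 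Comparing the two linear functionals of $g$ yields $\mu_{F,K}=-\lambda\varphi$, hence (after checking $\lambda<0$ and rescaling $K$ to absorb the constant) $\widetilde C_q(K,\cdot)=\varphi$.

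The main obstacle will be the compactness/existence part: establishing that a minimizing sequence of pseudo-cones has a subsequence converging (in an appropriate sense, e.g.\ locally Hausdorff or via radial functions) to a genuine $C$-pseudo-cone, rather than degenerating. Two failure modes must be excluded: the limit could drift to infinity so that $\rho_K\to\infty$ and $K$ escapes (making $\widetilde V_q\to 0$ since $q<0$), or the limit could approach the origin so that $o\in K$. Because $\varphi$ is now allowed to live on all of $\mathrm{cl}\,\Omega_{C^\circ}$ and need not have compact support in $\Omega_{C^\circ}$---precisely the case \cite{LYZ24} found intractable---the a~priori bounds controlling the minimizing sequence near $\mathrm{bd}\,\Omega_{C^\circ}$ are delicate. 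I expect the key estimate to exploit the sign $q<0$: the integrand $\rho_K^q$ is \emph{large} where $\rho_K$ is small, so the functional penalizes $K$ coming close to $o$, which should furnish the lower bound keeping $o\notin K$; dually, the normalization prevents escape to infinity. Establishing these two estimates uniformly, and verifying that the constrained minimizer is attained at a function of the form $\overline h_K$ for some $K\in ps(C)$ (so that the Euler equation applies), is where the real work lies.

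Finally, I would verify that the Lagrange multiplier computation is rigorous: one must justify differentiating $J_G$ along the one-parameter family $K_t=[\overline h_K e^{tg}]$ and confirm the variational identity holds with equality for \emph{all} continuous $g$ (not merely $g\ge 0$), so that the resulting measure identity $\mu_{F,K}=-\lambda\varphi$ holds as measures on $\mathrm{cl}\,\Omega_{C^\circ}$. The positivity of $-\lambda$ follows by testing with $g\equiv 1$ and using that both $\mu_{F,K}$ and $\varphi$ are nonzero finite measures. A homogeneity rescaling $K\mapsto sK$ then fixes the total mass to match $\varphi$ exactly, completing the argument.
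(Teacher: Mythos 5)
Your proposal follows essentially the same route as the paper: a variational argument whose Euler--Lagrange equation, computed via Theorem~\ref{T2.1} along the multiplicative perturbation $\overline h_K e^{tg}$, identifies $\widetilde C_q(K,\cdot)$ with a multiple of $\varphi$, followed by a homogeneity rescaling; the paper packages the Lagrange setup into the single scale-invariant functional $\Phi(f)=-\frac{1}{|\varphi|}\int\log f\,\D\varphi+\frac{1}{q}\log\widetilde V_q([f])$ minimized over $\{\widetilde V_q=1\}$. The two points you correctly flag as ``the real work'' are exactly what the paper supplies: Lemma~\ref{L4.1} gives the two-sided bound $c_1\le b(K)\le c_2$ under the normalization (using precisely your observation that $\rho_K^q$ blows up near $o$ when $q<0$), and the reduction from arbitrary $f\ge 0$ to support functions is handled via $f\le\overline h_{[f]}$ and $[\overline h_{[f]}]=[f]$, giving $\beta_{\rm fct}=\beta_{\rm pc}$.
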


The proof is prepared by the following lemma. For $K\in ps(C)$, we denote by $b(K)$ the distance of $K$ from the origin.

\begin{lemma}\label{L4.1}
Let $q<0$. There are positive constants $c_1,c_2$, depending only on $C,q,n$, with the following property. If $K\in ps(C)$ and $\widetilde V_q(K)=1$, then $c_1\le b(K)\le c_2$.
\end{lemma}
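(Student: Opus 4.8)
The plan is to establish two-sided bounds on $b(K)$ under the normalization $\widetilde V_q(K)=1$ with $q<0$, by analyzing how $\widetilde V_q(K)=\frac{1}{n}\int_{\Omega_C}\rho_K(v)^q\,\D v$ responds to the size of $K$. The key structural fact is monotonicity: if $K,K'\in ps(C)$ and $K\subseteq K'$, then $\rho_K\ge\rho_{K'}$ pointwise on $\Omega_C$ (the larger pseudo-cone reaches each ray $\{rv:r>0\}$ sooner), and since $q<0$ this reverses to $\rho_K^q\le\rho_{K'}^q$, giving $\widetilde V_q(K)\le\widetilde V_q(K')$. I would use this comparison against the reference pseudo-cones $C+z$ obtained by translating the cone, whose distance from the origin $b(C+z)$ can be made explicit and whose dual volume can be estimated.

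For the lower bound $b(K)\ge c_1$, I would argue as follows. If $b(K)$ is small, then $K$ comes close to the origin, so it contains a boundary point near $o$, which forces $\rho_K(v)$ to be small for $v$ in some subset of $\Omega_C$. More usefully, a small $b(K)$ means $K$ is ``large'' in the sense of extending deep toward the apex of $C$, so $\rho_K$ is small on a substantial portion of $\Omega_C$; since $q<0$, small $\rho_K$ makes $\rho_K^q$ large and hence $\widetilde V_q(K)$ large. To make this quantitative I would compare $K$ from outside: if $b(K)=\beta$, then $K$ is contained in the set $\{x\in C:\langle x,u_0\rangle\le -\beta\}$ for a suitable direction, which bounds $\rho_K$ from below by something like $\beta/|\langle v,u_0\rangle|$, yielding $\widetilde V_q(K)\le\frac{1}{n}\int_{\Omega_C}(\beta/|\langle v,u_0\rangle|)^q\,\D v=\beta^q\cdot(\text{const})$. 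Since $q<0$, as $\beta\to 0$ this upper bound tends to $0$, contradicting $\widetilde V_q(K)=1$; hence $\beta\ge c_1$. The constant depends only on $C,q,n$ because the integral $\int_{\Omega_C}|\langle v,u_0\rangle|^{-q}\,\D v$ is finite (here one needs $q<0$ so the exponent $-q>0$ keeps the integrand bounded near $\partial\Omega_C$) and determined by the geometry of $C$.

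For the upper bound $b(K)\le c_2$, I would run the reverse comparison: if $b(K)=\beta$ is large, then $K$ lies far from the origin, so every point of $K$ has $\langle x,u\rangle$ large in magnitude, forcing $\rho_K(v)\ge\beta$ for all $v\in\Omega_C$ (since $b(K)$ is the distance of the nearest boundary point and $K+C=K$ pushes the whole set outward). Then $\rho_K^q\le\beta^q$ (as $q<0$), so $\widetilde V_q(K)\le\frac{1}{n}\beta^q\,\cH^{n-1}(\Omega_C)$. For this to equal $1$ we need $\beta^q\ge n/\cH^{n-1}(\Omega_C)$, and since $q<0$ this bounds $\beta$ from above by a constant $c_2$ depending only on $C,q,n$. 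The one point requiring care is justifying $\rho_K(v)\ge b(K)$ for all $v\in\Omega_C$: this follows because the ball of radius $b(K)$ about $o$ does not meet $K$, so the ray $\{rv\}$ cannot enter $K$ before $r=b(K)$, giving $\rho_K(v)\ge b(K)$.

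The main obstacle I anticipate is the lower-bound direction, specifically verifying that a small $b(K)$ really forces $\rho_K$ to be uniformly small enough on a large-measure set to blow up $\widetilde V_q(K)$, with a bound uniform over all $K\in ps(C)$ rather than one depending on the fine shape of $K$. The clean way around this is the external containment $K\subseteq\{x\in C:\langle x,u_0\rangle\le -b(K)\}$ for an appropriate $u_0\in\Omega_{C^\circ}$: the nearest point $p$ of $K$ to $o$ has an outer normal $u_0$ with $\langle p,u_0\rangle=-b(K)$ and $|p|=b(K)$, and convexity plus $K+C=K$ confine $K$ to the indicated half-space intersected with $C$; this gives the pointwise lower bound on $\rho_K$ that drives the estimate. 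Establishing the finiteness and $C,q,n$-dependence of $\int_{\Omega_C}|\langle v,u_0\rangle|^{-q}\,\D v$ uniformly in $u_0\in\cl\,\Omega_{C^\circ}$ is then a compactness argument on $\cl\,\Omega_{C^\circ}$, using that the integrand stays integrable because $-q>0$.
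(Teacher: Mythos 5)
Your upper bound $b(K)\le c_2$ is correct and is essentially the paper's argument: $\rho_K(v)\ge b(K)$ for all $v\in\Omega_C$, hence $1=\widetilde V_q(K)\le\frac1n b(K)^q\sigma_{n-1}(\Omega_C)$, and $q<0$ turns this into an upper bound on $b(K)$. The lower bound, however, contains a genuine direction error that breaks the argument. From the outer containment $K\subseteq\{x\in C:\langle x,u_0\rangle\le-\beta\}$ (which is itself correct) you obtain a \emph{lower} bound $\rho_K(v)\ge\beta/|\langle v,u_0\rangle|$, hence, because $q<0$, an \emph{upper} bound $\widetilde V_q(K)\le\frac{\beta^q}{n}\int_{\Omega_C}|\langle v,u_0\rangle|^{-q}\,\D v$. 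You then claim this upper bound tends to $0$ as $\beta\to0$; but with $q<0$ we have $\beta^q=\beta^{-|q|}\to+\infty$ as $\beta\to0$, so the bound blows up and yields no contradiction with $\widetilde V_q(K)=1$. This is not a repairable computation: as $\beta\to0$ the half-space $\{\langle x,u_0\rangle\le-\beta\}$ grows to contain all of $C\setminus\{o\}$, so the outer containment carries less and less information, and no upper bound on $\widetilde V_q(K)$ can ever contradict $\widetilde V_q(K)=1$ from below anyway.

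The correct mechanism is the one your own opening intuition describes and then abandons: a small $b(K)$ must force $\widetilde V_q(K)$ to be \emph{large}, so you need a \emph{lower} bound on $\widetilde V_q(K)$, which requires an \emph{inner} comparison. This is what the paper does: pick $x\in K$ with $|x|=b(K)$; then $C+x\subseteq K$, so $\rho_K\le\rho_{C+x}$ pointwise. Fix $a>0$ with $\frac1n a^q\sigma_{n-1}(\Omega_C)>1$ (possible since $q<0$) and set $A(x)=\{v\in\Omega_C:\rho_{C+x}(v)\le a\}$; on $A(x)$ one has $\rho_K^q\ge a^q$, whence $\widetilde V_q(K)\ge\frac1n a^q\sigma_{n-1}(A(x))$. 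Since $\sigma_{n-1}(A(x))\to\sigma_{n-1}(\Omega_C)$ as $x\to o$, this exceeds $1$ once $b(K)$ is small enough, contradicting the normalization and producing $c_1$. You should replace your outer-containment step by this inner one.
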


\begin{proof}
We choose a number $a$ with
\begin{equation}\label{4.1}
\frac{1}{n}a^q\sigma_{n-1}(\Omega_C)>1,
\end{equation}
where $\sigma_{n-1}$ denotes the spherical Lebesgue measure on $\Sn$. For $x\in C$ and $\varepsilon>0$, let 
$$ A(x):=\{v\in\Omega_C:\rho_{C+x}(v)\le a\}$$ 
(which may be empty if $|x|$ is too large). If $K\in ps(C)$ and $x\in C\setminus\{o\}$, we have $C+x\subseteq K$, thus
$$ \rho_K(v)\le \rho_{C+x}(v)\le a\quad\mbox{for } v\in A(x)$$
and hence (since $q<0$)
$$ \widetilde V_q(K) \ge \frac{1}{n} \int_{A(x)} \rho_K(v)^q\,\D v\ge\frac{1}{n} a^q\sigma_{n-1}(A(x)).$$
Let $K\in ps(C)$ be a pseudo-cone with $b(K)=\varepsilon$. Then there is a point $x_\varepsilon\in K$ with $|x_\varepsilon|=\varepsilon$. Since $\sigma_{n-1}(A(x))\to \sigma_{n-1}(\Omega_C)$ as $x\to o$, we get
$$ \widetilde V_q(K)\ge \frac{1}{n}a^q\sigma_{n-1}(A(x_\varepsilon))\to \frac{1}{n}a^q\sigma_{n-1}(\Omega_C) \quad\mbox{as } \varepsilon \to 0.$$
This contradicts (\ref{4.1}), if there are $C$-pseudo-cones $K$ satisfying $\widetilde V_q(K)=1$ with arbitrarily small distance from the origin. The existence of the constant $c_1$ follows.

To prove the existence of $c_2$, let $K\in ps(C)$ and $\widetilde V_q(K)=1$. Define
$$ r_0:= \max\{r>0: rB^n\cap{\rm int}\,K=\emptyset\},$$
where $B^n$ denotes the unit ball in $\R^n$, centered at $o$. Then $\rho_K(v)\ge r_0$ for all $v\in\Omega_C$ and hence
$$ 1= \widetilde V_q(K) \le \frac{1}{n}\int_{\Omega_C} r_0^q\,\D v =\frac{1}{n} r_0^q\sigma_{n-1}(\Omega_C),$$
thus $r_0\le c_2$ with a constant $c_2$ depending only on $C,q,n$. By the definition of $r_0$, there is a vector $x\in K$ with $|x|=r_0\le c_2$. This implies $b(K)\le c_2$.
\end{proof}

Now we can prove Theorem \ref{T4.1}.

By $C^{\ge}({\rm cl}\,\Omega_{C^\circ})$ we denote the space of nonnegative, continuous functions on ${\rm cl}\,\Omega_{C^\circ}$ which are not identically zero. For $f\in C^{\ge}({\rm cl}\,\Omega_{C^\circ})$, the Wulff shape $[f]$ is a well-defined $C$-pseudo-cone.

Let $\varphi$ be a nonzero, finite Borel measure on ${\rm cl}\,\Omega_{C^\circ}$. Its total measure is denoted by $|\varphi|$. Let $q<0$.

For $f\in C^{\ge}({\rm cl}\,\Omega_{C^\circ})$ we define (as in \cite{LYZ24}, but with modifications)
$$ \Phi(f) := -\frac{1}{|\varphi|} \int_{{\rm cl}\,\Omega_{C^\circ}} \log f(u)\,\varphi(\D u) +\frac{1}{q}\log \widetilde V_q([f]).$$
We may have $\Phi(f)=\infty$ now, since $f$ is allowed to be zero, but $\Phi(f)$ is finite for some $f$, in particular for $\overline h_L$ for suitable $L\in ps(C)$. It is easily checked that $\Phi(\lambda f)= \Phi(f)$ for $\lambda>0$.

We can argue similarly as in \cite[Sect. 9]{Sch18}. We state  first that $\Phi$ attains a minimum on the set 
$${\mathcal L}':= \{\overline h_L: L\in ps(C),\,\widetilde V_q(L)=1\}.$$ 
For the proof, choose $L\in ps(C)$ with $\widetilde V_q(L)=1$. By the proof of Lemma \ref{L4.1}, there is a constant $c$, independent of $L$, such that $\overline h_L\le c$. It follows that $\Phi(\overline h_L)\ge -\log c$ and hence $\inf \{\Phi(f):f\in{\mathcal L}'\}>-\infty$. Also, there are $L\in{\mathcal L}'$ with $\Phi(\overline h_L)<\infty$.

Let $(K_i)_{i\in\N}$ be a sequence with $\overline h_{K_i}\in{\mathcal L}'$ such that
$$ \lim_{i\to\infty} \Phi(\overline h_{K_i}) = \inf\{\Phi(f):f\in{\mathcal L}'\}.$$
By Lemma 1 in \cite{Sch24a} and Lemma \ref{L4.1}, the sequence $(K_i)_{i\in\N}$ has a subsequence converging to a pseudo-cone $K_0\in ps(C)$. The corresponding support functions converge pointwise. By Lemma \ref{L4.1}, they are uniformly bounded. Using the dominated convergence theorem, we see that the minimum of $\Phi$ on ${\mathcal L}'$ is attained at $\overline h_{K_0}$. Modifying the notation from \cite[Sect. 6]{LYZ24}, we write
\begin{eqnarray*}
\beta_{\rm fct} &:=& \inf\{\Phi(f): f\in C^{\ge}({\rm cl}\,\Omega_{C^\circ})\},\\
\beta_{\rm pc} &:=& \inf\{\Phi(\overline h_K): K\in ps(C)\}.
\end{eqnarray*}
Since $\Phi$ is homogeneous of degree zero and $\widetilde V_q$ is homogeneous of degree $q\not= 0$, we have shown that $\beta_{\rm pc}=\Phi(\overline h_{K_0})$. Trivially, $\beta_{\rm pc} \ge \beta_{\rm fct}$. By (\ref{nV2}) we have $f(u)\le \overline h_{[f]}(u)$ for $u\in{\rm cl}\,\Omega_{C^\circ}$ and hence (observing that $[\overline h_{[f]}]=[f]$ by (\ref{3.2a})) $\Phi(f)\ge \Phi(\overline h_{[f]})$, thus $\beta_{\rm fct}\ge \beta_{\rm pc}$. We have obtained that $\beta_{\rm fct}= \Phi(\overline h_{K_0})$.

Now let $g$ be a continuous function on ${\rm cl}\,\Omega_{C^\circ}$ and let $K_t=[h_t]$ be the Wulff shape associated with $(C,h_t)$ (for sufficiently small $|t|>0$), where $h_t= \overline h_{K_0}e^{tg}$. Then
$$ \Phi(h_t) = -\frac{1}{|\varphi|} \int_{{\rm cl}\,\Omega_{C^\circ}} \log h_t(u)\,\varphi(\D u) +\frac{1}{q} \log\widetilde V_q([h_t]).$$
By Theorem \ref{T2.1}, together with (\ref{4.0}) and (\ref{4.0a}), we have (since $q<0$)
$$ \frac{\D}{\D t} \widetilde V_q([h_t])\Big|_{t=0} = q\int_{{\rm cl}\,\Omega_{C^\circ}} g(u)\,\widetilde C_q(K_0,\D u).$$

We know that $\beta_{\rm fct} = \Phi(\overline h_{K_0})$, the function $t\mapsto \Phi(h_t)$ is finite and differentiable at $0$, hence $\frac{\D \Phi(h_t)}{\D t}\big|_{t=0}=0$, and $[h_0]=[\overline h_{K_0}]= K_0$ by (\ref{3.2a}). Therefore,
$$ 0= -\frac{1}{|\varphi|} \int_{{\rm cl}\,\Omega_{C^\circ}} g(u)\,\varphi(\D u) +\frac{1}{\widetilde V_q(K_0)} \int_{{\rm cl}\,\Omega_{C^\circ}} g(u)\,\widetilde C_q(K_0,\D u).$$
Since this holds for all continuous functions $g$ on  ${\rm cl}\,\Omega_{C^\circ}$, we deduce that
$$ \varphi = \frac{|\varphi|}{\widetilde V_q(K_0)}\widetilde C_q(K_0,\cdot).$$
Since $\widetilde C_q(\lambda K_0,\cdot) = \lambda^q\widetilde C_q(K_0,\cdot)$ for $\lambda>0$, we get $\varphi= \widetilde C_q(\lambda K_0,\cdot)$ for suitable $\lambda>0$, which completes the proof of Theorem \ref{T4.1}.

The existence result proved by Theorem \ref{T4.1} raises a uniqueness question, but this remains open. For convex bodies, a uniqueness proof was provided by Zhao \cite{Zha17}, but it is at the moment not clear how it could be carried over to pseudo-cones.

\noindent Author's address:\\[2mm]
Rolf Schneider\\Mathematisches Institut, Albert--Ludwigs-Universit{\"a}t\\D-79104 Freiburg i.~Br., Germany\\E-mail: rolf.schneider@math.uni-freiburg.de

\end{document}